\numberwithin{equation}{section}
\newtheorem{thm}{Theorem}[section]
\newtheorem{lem}[thm]{Lemma}
\newtheorem{prop}[thm]{Proposition}
\title[]
{Restriction theorem for the Fourier-Hermite transform and solution of the Hermite-Schr\"odinger equation
}
\author{Shyam Swarup Mondal}
\author{Jitendriya Swain}
\address{Shyam Swarup Mondal  \endgraf Department of Mathematics
	\endgraf IIT Guwahati
	\endgraf Guwahati, Assam, India.}
\email{}
\address{Jitendriya Swain,  \endgraf Department of Mathematics
	\endgraf IIT Guwahati
	\endgraf Guwahati, Assam, India.}
\email{jitumath@iitg.ac.in}
\keywords{Restriction theorem, Strichartz inequality, Schr\"{o}dinger equations, Hermite operator, Hartree equation} \subjclass[2010]{Primary 35Q41, 47B10; Secondary  35P10, 35B65}
\date{\today}
\begin{document}

	\maketitle
	\begin{abstract}
In this article, we prove the restriction theorem for the Fourier-Hermite transform and obtain the Strichartz estimate  for  the system of orthonormal functions for the Hermite operator $H=-\Delta+|x|^2$ on $\mathbb{R}^n$  as application. Further, we show the an optimal behavior of the constant in the Strichartz estimate as limit of a large number of functions.
\end{abstract}

	%\tableofcontents 	
\section{Introduction}
Let $f\in L^1(\mathbb{R}^n)$. The Fourier-Hermite transform of $f$ is defined by $$\hat{f}(\mu)=\int_{\mathbb{R}^n}f(x)\Phi_\mu(x)\,dx, \quad \mu \in \mathbb{N}_0^n,$$ where $\mathbb{N}_0$ denotes the set of all non-negative integers and $\Phi_\mu$'s are the $n$-dimensional Hermite functions (defined in section 2).
If $f\in L^2(\mathbb{R}^n)$  then $\{\hat{f}(\mu)\}\in \ell^2(\mathbb{N}_0^n)$ and the Plancherel formula is of the form  $$\|f\|_2^2=\displaystyle\sum_{\mu\in{\mathbb{N}^n_0}}|\hat{f}(\mu)|^2.$$ The inverse Fourier-Hermite transform is given by $$f(x)=\sum_{\mu\in\mathbb{N}^n_0}\hat{f}(\mu)\Phi_\mu(x).$$
Given a discrete surface $S$ in $\mathbb{N}_0^n \times \mathbb{Z}$, we define the restriction operator $\mathcal{R}_Sf:=\{\hat{f}(\mu,\nu)\}_{(\mu,\nu)\in S}$  and the operator dual to $\mathcal{R}_S$ (called the extension operator)  as
\begin{align}\label{extension}
\mathcal{E}_S(\{\hat{f}(\mu,\nu)\}):= \sum_{(\mu,\nu)\in S} \hat{f}(\mu,\nu)\Phi_\mu(\cdot) e^{-i(\cdot)\nu}.
\end{align}
 We consider the following problem:

 {\bf Problem 1:} For which exponents $1\leq p\leq 2,$ the sequence of Fourier-Hermite transforms of a function $f\in L^p((-\pi, \pi)\times\mathbb{R}^n)$ belongs to $\ell^2(S)$?

 This question can be reframed to the boundedness of the operator $\mathcal{E}_S$ from $\ell^2(S)$ to $L^{p'}((-\pi, \pi)\times\mathbb{R}^n)$, where $p'$ is the conjugate exponent of $p$. Since $\mathcal{E}_S$ is bounded from $\ell^2(S)$ to $L^{p'}((-\pi, \pi)\times\mathbb{R}^n)$ if and only if $T_S:=\mathcal{E}_S(\mathcal{E}_S)^*$ is bounded from $L^p((-\pi, \pi)\times\mathbb{R}^n)$ to $L^{p'}((-\pi, \pi)\times\mathbb{R}^n)$,
Problem 1 can be re-written as follows:

{\bf Problem 2:} For which exponents $1\leq p\leq 2,$ the operator  $T_S:=\mathcal{E}_S(\mathcal{E}_S)^*$ is bounded from $L^p((-\pi, \pi)\times\mathbb{R}^n)$ to $L^{p'}((-\pi, \pi)\times\mathbb{R}^n)$?

Note that H\"{o}lder's inequality implies that the operator $T_S = \mathcal{E}_S(\mathcal{E}_S)^*$ is bounded from
$L^p((-\pi, \pi)\times\mathbb{R}^n)$ to $L^{p'}((-\pi, \pi)\times\mathbb{R}^n)$ if and only if for any $W_1, W_2\in L^{\frac{2p}{2-p}}((-\pi, \pi)\times\mathbb{R}^n)$, the operator $W_1T_SW_2$ is bounded on $L^2((-\pi, \pi)\times\mathbb{R}^n)$ with
$$\|W_1T_SW_2\|_{L^2((-\pi, \pi)\times\mathbb{R}^n)\to L^2((-\pi, \pi)\times\mathbb{R}^n)}\leq C\|W_1\|_{L^{\frac{2p}{2-p}}((-\pi, \pi)\times\mathbb{R}^n)}\|W_2\|_{L^{\frac{2p}{2-p}}((-\pi, \pi)\times\mathbb{R}^n)},$$ for some $C>0.$

We introduce an analytic family of operators $(T_z)$ defined on the strip $a\leq {\rm Re}~ z\leq b$ in the complex plane  such that $T_S=T_c$ for some $c\in (a, b)$ and show that the operator $W_1T_SW_2$ belongs to a Schatten class with $$ \|W_1T_SW_2\|_{\mathcal{G}^\alpha(L^2((-\pi, \pi)\times\mathbb{R}^n))}\leq C\|W_1\|_{L^{\frac{2p}{2-p}}((-\pi, \pi)\times\mathbb{R}^n)}\|W_2\|_{L^{\frac{2p}{2-p}}((-\pi, \pi)\times\mathbb{R}^n)},$$ for some $C>0$ and some $\alpha>0.$
 which is more general  result $L^p-L^{p'}$ boundedness of  $T_S.$

Such problems are often considered in the literature. For example, the celebrated Stein-Tomas Theorem (see \cite{Stein,T1,T2}) gives an affirmative answer to Fourier restriction problem for compact surfaces with non-zero Gaussian curvature if and only if $1\leq p\leq \frac{2(n+1)}{n+3}$.  For quadratic surfaces, Strichartz \cite{st}  gave a complete solution to Fourier restriction problem, when $S$ is a quadratic surface given by $S = \{x \in \mathbb{R}^n: R(x) = r\},$ where $R(x)$ is a polynomial of
degree two with real coefficients and $r$ is a real constant. Further the Stein-Tomas Theorem is generalized to a system of orthonormal functions with respect to the Fourier transform by Frank-Lewin-Lieb-Seiringer \cite{frank1} and Frank-Sabin \cite{frank}.

The main aim of the paper is to prove the restriction theorem for the system of orthonormal functions with respect to the Fourier-Hermite transform and discuss some of its applications to PDEs. Although the Strichartz inequality for  the system of orthonormal functions for the Hermite operator has been  proved in \cite{lee} using the classical Strichartz estimates for the free Schr\"odinger propagator for  orthonormal systems \cite{frank, frank1} and the link between the Schr\"odinger kernel and the Mehler kernel associated with  the Hermite semigroup \cite{SjT}. It is important to note that this result can also be obtained independently as a direct application of the Fourier-Hermite restriction theorem. To the best of our knowledge, the study on restriction theorem with respect to the Fourier-Hermite transform has not been considered in the literature so far. However, we prove the restriction theorem for the Fourier-Hermite transform and obtain the full range Strichartz estimate for the system of orthonormal functions for the Hermite operator $H=-\Delta+|x|^2$ on $\mathbb{R}^n$  as an application.
 we also show that the constant obtained the Strichartz inequality is optimal in terms of the limit of a large number of functions. Also, we discuss the global well-posedness results in Schatten spaces for the nonlinear Hermite-Hartree equation as an application to our main result.

The schema of the paper   is as follows: In Section \ref{CH2sec2},  we  discuss the spectral theory of the  Hermite operator and the kernel estimates for the Hermite semigroup. % the Strichartz inequality for the orthonormal functions, we obtain an inequality which is dual to (\ref{CH22}).
In Section \ref{CH2sec3}, we obtain the duality principle in terms of Schatten bounds of the operator $We^{-itH}(e^{-itH})^*\overline{W}$ and give an affirmative answer to Problem 2 when $p =\frac{2\lambda_0}{1+\lambda_0}$ for some $\lambda_0>1.$
In Section \ref{CH2sec4}, we obtain the Strichartz estimate for $1\leq q<\frac{n+1}{n-1}$, for the system of  orthonormal functions associated with  the Hermite operator as the restriction of the Hermite-Fourier transform to the discrete surface $S=\{(\mu, \nu)\in \mathbb{N}_0^n\times \mathbb{Z}: \nu=2|\mu|+n\}$. In Sections \ref{CH2sec5}, we prove the optimality of Schatten exponent and we obtain the global  well-posedness result for the non-linear  Hermite-Hartree equation in Schatten spaces.

%Further we prove an dual Strichartz inequality such that  (\ref{CH22}) holds for all  $1<q\leq1+ \frac{2}{n}$.
\section{Preliminary}\label{CH2sec2}
In this section we discuss some basic definitions and provide necessary background information  about the Hermite semigroup.
\subsection{Hermite Operator and the Spectral theory}
Let $\mathbb{N}_0$ be the set of all non-negative integers. Let $H_k$ denote the Hermite polynomial on $\mathbb{R}$, defined by
$$H_k(x)=(-1)^k \frac{d^k}{dx^k}(e^{-x^2} )e^{x^2}, \quad k\in \mathbb{N}_0$$
and $h_k$ denote the normalized Hermite functions on $\mathbb{R}$ defined by
$$h_k(x)=(2^k\sqrt{\pi} k!)^{-\frac{1}{2}} H_k(x)e^{-\frac{1}{2}x^2}, \quad k\in \mathbb{N}_0.$$ %The Hermite functions $\{h_k \}$ are the eigenfunctions of the Hermite operator $H=-\frac{d^2}{dx^2}+x^2$ with eigenvalues $2k+1,  k=0, 1, 2, \cdots$. These functions form an orthonormal basis for $L^2(\mathbb{R})$.
The higher dimensional Hermite functions denoted by $\Phi_{\alpha}$ are obtained by taking tensor product of one dimensional Hermite functions. Thus for any multi-index $\alpha \in \mathbb{N}_0^n$ and $x \in \mathbb{R}^n$, we define
 $\Phi_{\alpha}(x)=\prod_{j=1}^{n}h_{\alpha_j}(x_j).$
The family $\{\Phi_{\alpha}\}$ forms an orthonormal basis for $L^2(\mathbb{R}^n)$. They are eigenfunctions of the Hermite operator $H=-\Delta+|x|^2$ corresponding to eigenvalues $(2|\alpha|+n)$, where $|\alpha |=\sum_{j=1}^{n}\alpha_j$. Given $f \in L^{2}(\mathbb{R}^{n})$, we have the Hermite expansion
$$f=\sum_{\alpha \in \mathbb{N}_0^{n}}\left(f, \Phi_{\alpha}\right) \Phi_{\alpha}=\sum_{k=0}^\infty \sum_{|\alpha|=k}\left(f, \Phi_{\alpha}\right) \Phi_{\alpha}= \sum_{k=0}^\infty P_kf,$$ where $P_{k}$ denotes the orthogonal projection of $L^{2}(\mathbb{R}^{n})$ onto the eigenspace spanned by $\{\Phi_{\alpha} :|\alpha|=k\}.$
The operator $H$ defines a semigroup called the Hermite semigroup
$e^{-t H}, t>0,$   defined by
$$
e^{-t H} f=\sum_{k=0}^{\infty} e^{-(2 k+n) t} P_{k} f
$$
for $f \in L^{2}(\mathbb{R}^{n}).$ On a dense subspace, say the space of all Schwartz functions, the above can be written as
$$
e^{-t H} f(x)=\int_{\mathbb{R}^{n}} f(y) K_{t}(x, y) d y,
$$ where the kernel $K_{t}(x, y)$ is given by the expansion
$$
K_{t}(x, y)=\sum_{\alpha \in \mathbb{N}^{n}_0} e^{-(2|\alpha|+n) t} \Phi_{\alpha}(x) \Phi_{\alpha}(y).
$$
%In view of Mehler's formula the above series can be summed up and we obtain
%$$
%K_{t}(x, y)=(2 \pi)^{-\frac{n}{2}}(\sinh (2 t))^{-\frac{n}{2}} e^{-\frac{1}{2} \operatorname{coth}(2 t)\left(x^{2}+y^{2}\right)+\frac{1}{\sinh (2 t)} x \cdot y}.
%$$

%The solution to the initial value problem (\ref{CH2503}) is given by \begin{eqnarray}\label{CH2sol}
%u(x,t)=e^{-it H} f(x)=\sum_{k=0}^{\infty}e^{-it(2k+n)}P_kf(x).
%\end{eqnarray}  The solution (\ref{CH2sol}) can be expressed as an integral operator with kernel  $$
%K_{it}(x, y)=\sum_{\alpha \in \mathbb{N}^{n}} e^{-i(2|\alpha|+n) t} \Phi_{\alpha}(x) \Phi_{\alpha}(y).
%$$ Since $K_{it}(x, y)=K_{i(t+2\pi)}(x, y)$, the solution is periodic in $t$ with period $2\pi.$

For $z'=r+i t, r>0, t \in \mathbb{R},$ the kernel of the operator $e^{-z'H}$ is given by
$$
K_{z'}(x, y)=\sum_{k=0}^{\infty} e^{-z'(2 k+n)} \sum_{|\alpha|=k}\Phi_{\alpha}(x) \Phi_{\alpha}(y).
$$
Using Mehler's formula, the kernel of the operator $e^{-z'H}$ can be obtained as
\begin{align*}
K_{z'}(x, y)&=\frac{1}{(2\pi\sinh 2z')^{\frac{n}{2}}} e^{\frac{1}{2}\left( -\coth 2z'(|x|^2+|y|^2)+\frac{2 x\cdot y}{\sinh  2z'}\right) }.
\end{align*}
For $t\in \mathbb{R}\setminus(\frac{\pi}{2})\mathbb{Z}$, letting $r\to 0$, the kernel of the operator $e^{-itH}$ can be written as
\begin{align}\label{CH2k}
K_{it}(x, y)&=\frac{e^{-\frac{i\pi n}{4}}}{(2\pi\sin 2t)^{\frac{n}{2}}} e^{\frac{i}{2}\left( \cot 2t(|x|^2+|y|^2)-\frac{2 x\cdot y}{\sin  2t}\right) }.
\end{align} For $t\in \mathbb{R}\setminus(\frac{\pi}{2})\mathbb{Z}$, \begin{eqnarray}\label{CH2p} K_{-it}(x, y)=\overline{K_{it}(x, y)}\quad \mbox{and}\quad K_{i(t+\frac{\pi}{2})}(x, y)=e^{-i\pi\frac{n}{2}}K_{it}(-x, y).\end{eqnarray} For real valued functions $f$ the $L^p(\mathbb{R}^n)$ norm of $e^{-itH}f$ is even and $\frac{\pi}{2}$-periodic as a function of $t.$

We refer to \cite{Ratna,Thanga} for a detailed study on the kernel associated with the operator $e^{-itH}$.
\subsection{Schatten class and the duality principle} Let $\mathcal{H}$ be a complex and separable Hilbert space in which the inner product is denoted by $\langle, \rangle_\mathcal{H}$. Let $T:\mathcal{H} \rightarrow \mathcal{H}$ be a compact operator and let    $T^{*}$ denotes the adjoint of $T$.  For $1 \leq r<\infty,$ the Schatten space $\mathcal{G}^{r}(\mathcal{H})$ is defined as the space of all compact operators $T$ on $\mathcal{H}$ such that $$\sum_{n =1}^\infty  \left(s_{n}(T)\right)^{r}<\infty,$$ where $s_{n}(T)$ denotes the singular values of \(T,\) i.e., the eigenvalues of \(|T|=\sqrt{T^{*} T}\) counted according to multiplicity. For $T\in \mathcal{G}^{r}(\mathcal{H})$, the Schatten $r$-norm is defined by $$ \|T\|_{\mathcal{G}^{r}}=\left(\sum_{n=1}^{\infty}\left(s_{n}(T)\right)^{r}\right)^{\frac{1}{r}}. $$An operator belongs to the class \(\mathcal{G}^{1}({\mathcal{H}})\) is known as {\it Trace class} operator. Also, an operator belongs to   \(\mathcal{G}^{2}({\mathcal{H}})\) is known as  {\it Hilbert-Schmidt} operator.
  \section{The Restriction Theorem}\label{CH2sec3}
 In this section, we set a platform to prove the restriction theorem with respect to the Fourier-Hermite transform for a given discrete surface $S\subset \mathbb{N}_0^n\times\mathbb{Z}.$
 The following proposition assures an affirmative answer to Problem 2 under certain assumptions.

In order to obtain the Strichartz inequality for the system of orthonormal functions we need the duality principle lemma in our context. We refer to Proposition 1 and Lemma 3 of \cite{frank1} with appropriate modifications to obtain the following two results:

\begin{prop}\label{CH2prop1}
	Let $(T_z)$ be an analytic family of operators on $(-\pi, \pi)\times \mathbb{R}^n $ in the sense of Stein defined on the strip $-\lambda_0\leq  \operatorname{Re} z\leq 0$
	for some $\lambda_0 > 1$. Assume that we have the following bounds
	\begin{eqnarray}\label{CH2expo}\nonumber \left\|T_{i s}\right\|_{L^{2}((-\pi, \pi)\times \mathbb{R}^n ) \rightarrow L^{2}((-\pi, \pi)\times \mathbb{R}^n )} \leq M_{0} e^{a|s|},~\left\|T_{-\lambda_{0}+i s}\right\|_{L^{1}((-\pi, \pi)\times \mathbb{R}^n ) \rightarrow L^{\infty}((-\pi, \pi)\times \mathbb{R}^n )} \leq M_{1} e^{b|s|}\\ \end{eqnarray}
	for all $ s \in \mathbb{R}$, for some $a, b ,M_{0}, M_{1} \geq 0 $. Then, for all  $W_{1}, W_{2} \in L^{2 \lambda_{0}}\left((-\pi, \pi)\times \mathbb{R}^n , \mathbb{C}\right)$
	the operator $W_{1} T_{-1} W_{2}$ belongs to $\mathcal{G}^{2 \lambda_{0}}\left(L^{2}\left((-\pi, \pi)\times \mathbb{R}^n \right)\right)$ and we have the estimate
	\begin{align}\label{CH2510}
	\left\|W_{1} T_{-1} W_{2}\right\|_{\mathcal{G}^{2 \lambda_{0}}\left(L^{2}\left((-\pi, \pi)\times \mathbb{R}^n \right)\right)} \leq M_{0}^{1-\frac{1}{\lambda_{0}}} M_{1}^{\frac{1}{\lambda_{0}}}\left\|W_{1}\right\|_{L_t^{2 \lambda_{0}}L_x^{2 \lambda_{0}}\left((-\pi, \pi)\times \mathbb{R}^n \right)}\left\|W_{2 }\right\|_{L_t^{2 \lambda_{0}}L_x^{2 \lambda_{0}}\left((-\pi, \pi)\times \mathbb{R}^n \right)}.
	\end{align}
\end{prop}
%\begin{proof} We refer to Proposition 1 of \cite{frank1} with appropriate modification
%\end{proof}
%
%
%
% When $T_{-1}$ can be decompose into $T_{-1}=AA^*,$ we have the following duality principle using the previous proposition.
 \begin{lem}\label{CH2dual}(Duality principle)
 	Let $p,q\geq 1$ and $\alpha\geq1$. Let $A$ be a bounded linear operator from  $ L^2(\mathbb{R}^n)$ to ${L_t^{\frac{q'}{2}}L_x^{\frac{p'}{2}}((-\pi, \pi)\times \mathbb{R}^{n})}.$  Then the following statements are equivalent.
 	\begin{enumerate}
 		\item There is a constant \(C>0\) such that
 		\begin{align}\label{CH2511}
 		\left\|W A A^{*} \overline{W}\right\|_{\mathcal{G}^{\alpha}\left(L^{2}\left( (-\pi, \pi) \times\mathbb{R}^{n} \right)\right)} \leq C\|W\|_{{L_t^{\frac{2q}{2-q}}L_x^{\frac{2p}{2-p}}((-\pi, \pi)\times \mathbb{R}^{n})}}^{2} \end{align} for all $W \in {L_t^{\frac{2q}{2-q}}L_x^{\frac{2p}{2-p}}((-\pi, \pi)\times \mathbb{R}^{n})}$, where the function $W$ is interpreted as an operator which acts by multiplication.
 		
 		\item  For any orthonormal system $\left(f_{j}\right)_{j \in J}$
 		in  $L^2(\mathbb{R}^n)$ and any sequence $\left(n_{j}\right)_{j \in J} \subset \mathbb{C}$, there is a constant \(C'>0\) such that
 		\begin{align}\label{CH2512}
 		\left\|\sum_{j \in J} n_{j} \left| A f_{j}\right|^{2}\right\|_{L_t^{\frac{q'}{2}}L_x^{\frac{p'}{2}}((-\pi, \pi)\times \mathbb{R}^{n})} \leq C' \left(\sum_{j \in J}\left|n_{j}\right|^{\alpha^{\prime}}\right)^{1 / \alpha^{\prime}}.
 		\end{align}
 		 	\end{enumerate}
\end{lem}
Note that Lemma \ref{CH2dual} and Proposition \ref{CH2prop1} are also valid in the domain $(-\frac{\pi}{4}, \frac{\pi}{4})\times \mathbb{R}^{n}.$

 Let $S$ be the discrete surface $S=\{ (\mu, \nu) \in \mathbb{N}_0^{n}\times\mathbb{Z}: R(\mu, \nu)=0\},$ where $R(\mu, \nu)$ is a polynomial of degree one,  with respect to the counting measure.

 	For $-1< {\rm Re}~ z\leq 0$, consider the analytic family of generalized functions
 	\begin{align}\label{CH2gf}
 	G_z(\mu , \nu)=\psi(z)R(\mu, \nu)_+^z,
 	\end{align}
 	where $\psi(z)$ is an appropriate analytic function with a simple zero at $z=-1$ with  exponential growth at infinity when $\operatorname{Re}(z)=0 $ and
 	$$R(\mu, \nu)_+^z=\begin{cases}
 	R(\mu, \nu)^z~\text{ for }  R(\mu, \nu)>0,\\0~\quad\quad\quad \text{ for }R(\mu, \nu)\leq0.
 	\end{cases} $$
 	For Schwartz class functions $\phi$ on $\mathbb{N}_0^{n}\times\mathbb{Z},$ we have $	\left\langle G_{z}, \phi\right\rangle:=\psi(z)\sum_{\mu, \nu}R(\mu, \nu)_+^z  \phi(\mu, \nu) $ and
 	$\displaystyle\lim_{z\to -1}\left\langle G_{z}, \phi\right\rangle=\lim_{z\to -1}\psi(z) \sum_{\mu,\nu}\phi(\mu, \nu)R(\mu, \nu)_+^z
 	%=\psi(z) \sum_{\mu, \nu}\left( \frac{\delta(R(\mu, \nu))}{z+1}+ R(\mu, \nu)^{-1}+\cdots  \right) \phi(\mu, \nu)
 	=\sum_{(\mu,\nu)\in S}\phi(\mu, \nu).$ We refer to \cite{sh} for the distributional calculus of $R(\mu, \nu)_+^z.$
 	Thus $G_{-1}=\delta_S.$ For $-1< {\rm Re}~ z\leq 0$, define the analytic family of operators $T_z$ (on Schwartz class functions on $(-\pi, \pi)\times \mathbb{R}^n$) by
 	$$T_z g(t, x) = \displaystyle\sum_{\mu, \nu} \hat{g}(\mu, \nu) G_z( \mu, \nu) \Phi_\mu(x) e^{-i\nu t },$$ where $\displaystyle\hat{g}(\mu, \nu)=\int_{\mathbb{R}^n}\int_{(-\pi, \pi)}g(t, x)\Phi_\mu(x)e^{i\nu t}\,dx dt.$ Then $(T_z)$  is an analytic in the sense of Stein defined on the strip $-\lambda_0\leq  \operatorname{Re} z\leq 0$ for some $\lambda_0>1$ and
 	\begin{align}\label{CH222-22}T_z g(t, x) %&= \sum_{\mu, \nu} \hat{g}(\mu, \nu) G_z( \mu, \nu) \phi_\mu(x) e^{-i\nu t }\nonumber\\
 	%&= \sum_{\mu, \nu} \left( \int_{\mathbb{R}^n }\int_{(-\pi, \pi)} g(y, s) \phi_\mu(y) e^{i\nu s} ~dy~ds \right) G_z( \mu, \nu) \phi_\mu(x) e^{-i\nu t }\\
 	%&= \int_{\mathbb{R}^n }\int_{(-\pi, \pi)}  \sum_{\mu, \nu}   \phi_\mu(x) \phi_\mu(y)   G_z( \mu, \nu) e^{-i\nu (t-s) }g(y, s)   ~dy~ds\\
 	%&= \int_{\mathbb{R}^n }\int_{(-\pi, \pi)}  K(x, y, t-s) g(y, s)   ~dy~ds\\
 	&= \int_{\mathbb{R}^n }  (K_z(x, y, \cdot )*g(y, \cdot))(t)   ~dy,
 	\end{align}
 	where $ K_z(x, y, t) =\displaystyle \sum_{\mu, \nu}   \Phi_\mu(x) \Phi_\mu(y)   G_z( \mu, \nu) e^{-i\nu t }.$  When $Re(z)=0$, we have
 	\begin{align}\label{CH2twoo}
 	\|T_{is} \|_{L^2((-\pi, \pi)\times \mathbb{R}^n)\to L^2((-\pi, \pi)\times \mathbb{R}^n)} =\left\|G_{i s}\right\|_{L^{\infty}\left((-\pi, \pi)\times \mathbb{R}^n \right)}\leq \left|  \psi(is)\right|.	\end{align}
 	
 	Again an application of H\"older and Young inequalities in (\ref{CH222-22}) gives \begin{eqnarray}\label{CH21inf}
 	|T_z g(t, x) |\leq  \sup_{t\in (-\pi, \pi),~ y\in \mathbb{R}^n}|K_z(x, y, t )| \|g\|_{L^1((-\pi, \pi)\times\mathbb{R}^n)}
 	\end{eqnarray} for $g\in L^1((-\pi, \pi)\times\mathbb{R}^n )$. Denoting $T_S=\mathcal{E}_S\mathcal{E}_S^*$, we obtain the following Schatten bound (see (\ref{CH2SB}) below) for the operator $W_{1} T_{S} W_{2}$, where $W_1,W_2\in L_{t, x}^{2\lambda_0}\left((-\pi, \pi)\times \mathbb{R}^n\right)$.
 	
 	  	\begin{thm}\label{CH2diagggg} (Fourier-Hermite restriction theorem)
 	Let $n \geq 1$ and let $S \subset \mathbb{N}_0^{n}\times\mathbb{Z}$ be a discrete  surface. Suppose that  for each $x,y\in \mathbb{R}$,    $|K_z(x,y,t)|$ is bounded  and has at most exponential growth at infinity when  $z=-\lambda_{0}+is$ for some $\lambda_0>1$. Then $T_S$ is bounded from $L^p((-\pi, \pi)\times\mathbb{R}^n)$ to $L^{p'}((-\pi, \pi)\times\mathbb{R}^n)$ for $p=\frac{2\lambda_0}{1+\lambda_0}$.
 	
 \end{thm}
\begin{proof} It is enough to show \begin{eqnarray}\label{CH2SB}\left\|W_{1} T_{S} W_{2}\right\|_{\mathcal{G}^{2\lambda_0}\left(L^{2}\left((-\pi, \pi)\times \mathbb{R}^n\right)\right)}  \leq C\left\|W_{1}\right\|_{L_{t, x}^{2\lambda_0}\left((-\pi, \pi)\times \mathbb{R}^n\right)} \left\|W_{2}\right\|_{L_{t, x}^{2\lambda_0}\left((-\pi, \pi)\times \mathbb{R}^n\right)}\end{eqnarray} for all $W_1, W_2\in L_{t, x}^{2\lambda_0}\left((-\pi, \pi)\times \mathbb{R}^n\right)$ by Lemma \ref{CH2dual}. By our assumption, together with (\ref{CH2twoo}), (\ref{CH21inf}) and Proposition \ref{CH2prop1}, we get (\ref{CH2SB}).
\end{proof}
 \section{Strichartz inequality for system of  orthonormal functions}\label{CH2sec4}
 Consider the Schr\"odinger equation associated with the Hermite operator $H=-\Delta+|x|^2$:
\begin{align}\label{CH2503}
i \partial_{t} u(t, x)&=H u(t, x) \quad t \in \mathbb{R},~x \in \mathbb{R}^{n} \\\nonumber u(x, 0)&=f(x).
 \end{align}
If  $f\in L^2(\mathbb{R}^n)$, the solution of the initial value problem (\ref{CH2503}) is given  by $u(t,x)=e^{-i t H} f(x).$ The solution to the initial value problem (\ref{CH2503}) can be realized as the extension operator of some function $f$ on $(-\pi, \pi)\times\mathbb{R}^n.$ To estimate the solution to the initial value problem (\ref{CH2503}) is equivalent to obtain the Schatten bound (\ref{CH2511}) with $A=e^{-i t H}$.

Let $S$ be the discrete surface $S=\{(\mu, \nu)\in \mathbb{N}_0^n\times \mathbb{Z}: \nu=2|\mu|+n\}$ with respect to the counting measure. Then for all $\hat{f} \in \ell^{1}(S)$ and for all $(t, x)\in [-\pi,\pi]\times \mathbb{R}^n$, the extension operator can be written as
\begin{align}\label{CH2surface}
\mathcal{E}_{S} f(t, x)=\sum_{\mu, \nu\in S}\hat{f}(\mu, \nu) \Phi_\mu(x)e^{-it\nu},
\end{align}
where $\displaystyle\hat{f}(\mu, \nu)=\int_{\mathbb{R}^n}\int_{(-\pi, \pi)}f(t, x)\Phi_\mu(x)e^{i\nu t}\,dx dt.$ Choosing
$$
\hat{f}(\mu, \nu)=\left\{\begin{array}{ll}       {\hat{u}(\mu)} & {\text { if } \nu=2|\mu|+n,}\\{0} & {\text {otherwise,} }\end{array} \right.
$$ for some $u:\mathbb{R}^n\to \mathbb{C}$
in (\ref{CH2surface}), we get
\begin{align*}
\mathcal{E}_{S} f(t, x)&=\sum_{\mu, \nu\in S}\hat{f}(\mu, \nu) \Phi_\mu(x)e^{-it\nu}\\
%&=\sum_{\mu}\hat{f}(\mu) \phi_\mu(x)e^{-it (2|\mu|+n)}\\
%&=\sum_{\mu}\left(\int f(y)\phi_\mu(y)~dy\right) \phi_\mu(x)e^{-it (2|\mu|+n)}\\
&=\int_{\mathbb{R}^n} \left( \sum_{\mu} \Phi_\mu(x)\Phi_\mu(y) e^{-it (2|\mu|+n)}\right) u(y)~dy\\
&=e^{-itH}u(x).
\end{align*}
Setting $\psi(z)=\frac{1}{\Gamma(z+1)}$ and $R(\mu,\nu)=\nu-(2|\mu|+n)$ in (\ref{CH2gf}), we get
\begin{align*}
G_z(\mu , \nu)=\frac{1}{\Gamma(z+1)}(\nu-(2|\mu|+n))_+^z
\end{align*} and for Schwartz class functions $\Phi$ on $\mathbb{N}_0^{n}\times\mathbb{Z},$
	$$\displaystyle\lim_{z\to -1}\left\langle G_{z}, \phi\right\rangle=\lim_{z\to -1}\frac{1}{\Gamma(z+1)} \sum_{\mu,\nu}\phi(\mu, \nu)(\nu-(2|\mu|+n))_+^z=\sum_{(\mu,\nu)\in S}\phi(\mu, \nu).$$
		Thus $G_{-1}=\delta_S$ and
\begin{align}\label{CH22-2}T_z g(t, x) %&= \sum_{\mu, \nu} \hat{g}(\mu, \nu) G_z( \mu, \nu) \phi_\mu(x) e^{-i\nu t }\nonumber\\
&= \int_{\mathbb{R}^n }  (K_z(x, y, \cdot )*g(y, \cdot))(t)   ~dy,
\end{align}
where \begin{align}\label{CH2kernl}
K_z(x, y, t )&
%=\sum_{\mu, \nu}   \phi_\mu(x) \phi_\mu(y)   G_z( \mu, \nu) e^{i\nu t }\\\nonumber
%%&=\frac{1}{\Gamma(z+1)}\sum_{\mu, \nu}   \phi_\mu(x) \phi_\mu(y)   (\nu-(2|\mu|+n))_{+}^ze^{-i\nu t }\\\nonumber
%&=\frac{1}{\Gamma(z+1)}\sum_{\mu}   \phi_\mu(x) \phi_\mu(y)   \left( \sum_{\nu}  (\nu-(2|\mu|+n))_{+}^ze^{-i\nu t }\right)\\\nonumber
% &=\frac{1}{\Gamma(z+1)}\sum_{\mu}   \phi_\mu(x) \phi_\mu(y)   \left( \sum_{\nu\geq 2|\mu|+n }  (\nu-(2|\mu|+n))^ze^{-i\nu t }\right)\\\nonumber %&=\sum_{\mu}   \phi_\mu(x) \phi_\mu(y)   \left( \sum_{k }  k^ze^{-i t(2|\mu|+n ) }\right)\\\nonumber
% &=\frac{1}{\Gamma(z+1)}\sum_{\mu}   \phi_\mu(x) \phi_\mu(y)   \left( \sum_{k }  k^ze^{-i t(2|\mu|+n+k ) }\right)\\
=\frac{1}{\Gamma(z+1)}\sum_{\mu}   \Phi_\mu(x) \Phi_\mu(y)   e^{-i t(2|\mu|+n ) }  \sum_{k=0 }^\infty k_+^z e^{-i tk }.
%&=\frac{1}{\Gamma(z+1)}\sum_{\mu}   \phi_\mu(x) \phi_\mu(y)   e^{-i t(2|\mu|+n ) }  \frac{1}{(it)^z}\sum_{\xi \in L}  \xi^z e^{-\xi },the set $L$ is given by $L=\{0,it, 2it, \cdots\}.$
\end{align}
We first calculate $ \displaystyle\sum_{k=0 }^\infty k_+^z e^{-i tk }$ in the following proposition.

%We first consider  the summation in the right hand side in (\ref{CH2kernl}). Consider the expression \begin{align*}& \sum_{k }  k_+^z e^{-i tk } e^{-\epsilon k}, ~\epsilon>0\\&=\sum_{k }  k_+^z e^{-i k(t-i\epsilon )}=\sum_{k }  k_+^z e^{-i kt'},\end{align*} where $t'=t-i\epsilon $ and $-\pi <\operatorname{arg}t'<0.$ As $\epsilon\to 0$,  $  k_+^z e^{-i tk } e^{-\epsilon k}$ converges to $   k_+^z e^{-i tk }$ in the sense of generalized functions and so the series   $ \sum_{k }  k_+^z e^{-i tk } e^{-\epsilon k}$ converges to $ \sum_{k }  k_+^z e^{-i tk }$.
%Making a change of variable $ikt'=\xi$ we get $$\sum_{k }  k^z e^{-i kt' }=\frac{1}{(it')^z} \sum_{\xi \in L'} \xi^ze^{-\xi},$$ where the set $L'$ is given by $L'=\{0,it', 2it', \cdots\}$ whose all the points lie on the ray $L^*. $ Notice that $L^*$ makes an angle $\operatorname{arg}t'+\frac{\pi}{2}$ with the real axis. So $ \operatorname{arg}(L^*)\in \left( -\frac{\pi}{2}, \frac{\pi}{2}\right ) $.

\begin{prop}\label{CH2515}
Let $-1< {\rm Re}~ z<0$. Then the series
$ \sum_{k=0 }^\infty k_+^z e^{-i tk }$ is the Fourier series of a integrable function on $[-\pi, \pi]$ which is of class $C^\infty $ on $[-\pi, \pi]\setminus  \{0\}.$ Near  origin this function has the same singularity as the function whose values are $ \Gamma(z+1)(it)^{-z-1}$, i.e.,
\begin{align}\label{CH2eq4} \sum_{k=0 }^\infty k_+^z e^{-i tk }\sim  \; \Gamma(z+1)(it)^{-z-1}+b(t),\end{align}
where $b\in C^\infty[-\pi , \pi]$.
 %Then\begin{align}\label{CH2eq4}
%  \sum_{k=0 }^\infty k_+^z e^{-i tk }\asymp \; \Gamma(z+1)(it)^{-z-1}.
%\end{align}
 \end{prop}
\begin{proof}
 For $\tau>0$,  we calculate the inverse Fourier  transform of  $u_+^z e^{-\tau u }.$
  \begin{align*}
  \mathcal{F}^{-1}[ u_+^z e^{-\tau u }](x)&=\int_\mathbb{R}  u_+^z e^{-\tau u } e^{-i ux }\;du=\int_0^\infty   u^z e^{-i s u}\;du,
  \end{align*}
    where $s=x-i\tau$ so that $-\pi<\arg s<0$. Then   $ u_+^z e^{-\tau u }$ converges to  $u_+^z  $  in the sense of distributions as $\tau \to 0$. Also,  the inverse Fourier  transform of  $u_+^z e^{-\tau u }$ converges to  the inverse Fourier  transform of  $u_+^z  $. Using the change of variable $isu=\xi$ and proceeding as in page 170 of \cite{sh}, we get

    \begin{align}\label{CH2tau}
  \mathcal{F}^{-1}[ u_+^z e^{-\tau u }](x)& =\frac{1}{(is)^{z+1}}\int_L  \xi^z e^{-\xi} \;d\xi=\frac{\Gamma(z+1)}{(is)^{z+1}},
\end{align}
  where   the contour $L$ of the integral is a ray from origin to infinity whose angle with respect to the real axis is given by $\arg\xi=\arg s+\frac{\pi}{2}$.
 Letting  $\tau \to 0$ in (\ref{CH2tau}),  we have
   \begin{align}\label{CH2eq50}
  \mathcal{F}^{-1}[ u_+^z ](x)& =\frac{\Gamma(z+1)}{(ix)^{z+1}}.
  \end{align}
  By analytic continuation (\ref{CH2eq50}) is valid for all $z\neq -1$.

We use the idea given in Theorem 2.17 of \cite{stein11} to prove (\ref{CH2eq4}). To make the paper self contained, we will only indicate the main steps. Let us consider a function $\eta\in C^\infty(\mathbb{R})$ such that $\eta(x)=1$ if $|x|\geq 1$, and vanishes in a neighborhood of the origin. Let $F(x)=\eta(x)x_{+}^z $ for $x\in  \mathbb{R}$. Writing  $F(x)=x_{+}^z +(\eta(x)-1)x_{+}^z $, using (\ref{CH2eq50}) and denoting $f$ to be the inverse Fourier transform of $F$ in the sense of distributions, we have  $f(x)= {\Gamma(z+1)}{(ix)^{-z-1}}+b_1(x)$, where $b_1$ is the inverse Fourier transform of the   integrable function %whose values are
$(\eta(x)-1)x_{+}^z$  whose support is bounded.  Moreover, $b_1\in C^\infty(\mathbb{R})$ and $f\in L^1(\mathbb{R}).$

Applying Poisson summation formula (see page  250 of  \cite{stein11}) to the function $f$ and using the fact   $\hat{f}=F$, we get
\begin{align*}
  \sum_{k=0 }^\infty k_+^z e^{-i tk }&= \sum_{k \in \mathbb{Z}} F(k)e^{-i tk }\\
  &\sim  \sum_{k\in \mathbb{Z}} f(2k\pi+t)  \\
  &=f(t)+ \sum_{|k|>0} f(2k\pi+t)  \\
  &= {\Gamma(z+1)}{(it)^{-z-1}}+b_1(t)+ \sum_{|k|>0} f(2k\pi+t)\\
  &= \Gamma(z+1)(it)^{-z-1}+b(t),
\end{align*}
where $b(t)=b_1(t)+ \sum_{|k|>0} f(2k\pi+t)\in C^\infty[-\pi , \pi]$.

\end{proof}

Now we are in a position to prove the following Strichartz inequality for the diagonal case.
\begin{thm}\label{CH2diag} (Diagonal case)
	Let 	$n \geq 1$. Then
for any (possibly infinite) system $\left(u_{j}\right)$ of orthonormal functions in $L^{2}\left(\mathbb{R}^{n}\right)$
	and any coefficients $ \left( n_{j} \right) \subset \mathbb{C},$ we have
	\begin{align}\label{CH2d}
	\left\| \sum_{j} n_{j}\left| e^{-i t H} u_{j}\right|^2\right\|_{L_{t,x}^{1+\frac{2}{n}}((-\pi, \pi)\times\mathbb{R}^n)}  \leq C\left(\sum_{j}\left|n_{j}\right|^{\frac{n+2 }{n+1}}\right)^{\frac{n+1}{n+2}},
	\end{align}
	where $C>0$ is independent of on $n$ and $q$.

\end{thm}
\begin{proof}  To prove (\ref{CH2d}), it is enough to show \begin{eqnarray}\label{CH2period}\left\|W_{1} T_{S} W_{2}\right\|_{\mathcal{G}^{n+2}\left(L^{2}\left((-\frac{\pi}{4}, \frac{\pi}{4})\times \mathbb{R}^n\right)\right)}  \leq C\left\|W_{1}\right\|_{L_{t, x}^{n+2}\left((-\frac{\pi}{4}, \frac{\pi}{4})\times \mathbb{R}^n\right)} \left\|W_{2}\right\|_{L_{t, x}^{n+2}\left((-\frac{\pi}{4}, \frac{\pi}{4})\times \mathbb{R}^n\right)}\end{eqnarray} for all $W_1, W_2\in L_{t, x}^{n+2}\left((-\frac{\pi}{4}, \frac{\pi}{4})\times \mathbb{R}^n\right),$ where $S=\{(\mu, \nu)\in \mathbb{N}_0^n\times \mathbb{Z}: \nu=2|\mu|+n\}$. Since applying Lemma \ref{CH2dual} to (\ref{CH2period}) gives \begin{eqnarray}\label{CH2p1}\left\| \sum_{j} n_{j}\left| e^{-i t H} u_{j}\right|^2\right\|_{L_{t,x}^{1+\frac{2}{n}}((-\frac{\pi}{4}, \frac{\pi}{4})\times\mathbb{R}^n)}  \leq C\left(\sum_{j}\left|n_{j}\right|^{\frac{n+2 }{n+1}}\right)^{\frac{n+1}{n+2}}.\end{eqnarray} Using the kernel properties (\ref{CH2p}) of the semigroup $e^{-itH}$ the range of $t$ can be extended to $(-\pi,\pi)$.
We show that the family of operators $(T_z)$ (defined in (\ref{CH22-2})) satisfies (\ref{CH2expo}).  	When $Re(z)=0$, we have
	\begin{align}\label{CH2two}
	\|T_{is} \|_{L^2((-\frac{\pi}{4}, \frac{\pi}{4})\times \mathbb{R}^n)\to L^2((-\frac{\pi}{4}, \frac{\pi}{4})\times \mathbb{R}^n)} =\left\|G_{i s}\right\|_{L^{\infty}\left((-\frac{\pi}{4}, \frac{\pi}{4})\times \mathbb{R}^n \right)}\leq \left|  \frac{1}{\Gamma(1+i s)} \right|\leq C e^{\pi|s| / 2}.	\end{align}
When $z=-\lambda_{0}+is$, (\ref{CH21inf}) gives $T_z$  is bounded from $ L^1((-\frac{\pi}{4}, \frac{\pi}{4})\times \mathbb{R}^n)$ to $ L^\infty ((-\frac{\pi}{4}, \frac{\pi}{4})\times \mathbb{R}^n)$ if and only if $|K_z(x,y,t)|$ is bounded for each $x, y\in\mathbb{R}^n.$
But by (\ref{CH2kernl}), Proposition \ref{CH2515} and (\ref{CH2k}), we get
\begin{align}\label{CH2kernel}
|K_z(x, y, t)|&\sim\frac{C}{ |t|^{{\rm Re}~(z+1+\frac{n}{2})}}e^{iz\frac{\pi}{2}}.
\end{align}
So for each $x,y\in \mathbb{R}$, $|K_z(x,y,t)|$ is bounded if and only if ${\rm Re}~(z)=-\frac{n+2}{2}.$   The conclusion of the theorem follows by choosing $\lambda_0=\frac{n+2}{2}$ by Proposition \ref{CH2prop1} and the identity $T_S=T_{-1}.$
\end{proof}

To obtain the Strichartz inequality for the general case we need to observe the following.
\begin{thm}\label{CH2fr}
	Let $S$ be the discrete surface $S=\{(\mu, \nu)\in \mathbb{N}_0^n\times \mathbb{Z}: \nu=2|\mu|+n\}$ with respect to the counting measure.
	Then for all exponents $p, q \geq 1$ satisfying
	$$
	\frac{2}{p}+\frac{n}{q}=1, \quad q>n+1
	$$
	we have
	\begin{eqnarray}\label{CH2p2}
	\left\|W_{1} T_{S} W_{2}\right\|_{\mathcal{G}^{q}\left(L^{2}\left((-\pi, \pi)\times \mathbb{R}^{n}\right)\right)} \leq C\left\|W_{1}\right\|_{L_{t}^{p} L_{x}^{q}\left((-\pi, \pi)\times \mathbb{R}^{n}\right)}\left\|W_{2}\right\|_{L_{t}^{p} L_{x}^{q}\left((-\pi, \pi)\times \mathbb{R}^{n}\right)}
	\end{eqnarray}
	with $C>0$ independent of $W_{1}, W_{2}$.
\end{thm}
\begin{proof}
  The operator $ T_{-\lambda_{0}+i s}  $ is  an integral operator with kernel $K_{{-\lambda_{0}+is}}(x, x', t-t')$ defined in (\ref{CH22-2}). An application of  Hardy-Littlewood-Sobolev inequality along with (\ref{CH2two}) and (\ref{CH2kernel}) yields
\begin{align*}
&	\left\|W_{1}^{\lambda_{0}-i s} T_{-\lambda_{0}+i s} W_{2}^{\lambda_{0}-i s}\right\|_{\mathcal{G}^{2}}^{2}\\%&=\int_{\mathbb{R}^n\times (-\pi, \pi)} \int_{\mathbb{R}^n\times (-\pi, \pi)} W_{1}(x, t)^{2 \lambda_{0}}\left|K_{T_{-\lambda_{0}+i s}}(x,t,  y, s)\right|^{2} W_{2}(y, s)^{2 \lambda_{0}} ~d x~ dt~dy~ds \\
&=\int_{(-\frac{\pi}{4}, \frac{\pi}{4})^{2}}\int_{\mathbb{R}^{2n}} W_{1}(t, x)^{2 \lambda_{0}}\left|K_{-\lambda_{0}+i s}(x,t,  x', t')\right|^{2} W_{2}\left(t^{\prime}, x^{\prime}\right)^{2 \lambda_{0}} d x d x^{\prime} d t d t^{\prime}\\
&\leq C_1\int_{(-\frac{\pi}{4}, \frac{\pi}{4})^{2}}\int_{\mathbb{R}^{2n}} \frac{W_{1}(t, x)^{2 \lambda_{0}} W_{2}\left(t^{\prime}, x^{\prime}\right)^{2 \lambda_{0}} }{\left|t-t^{\prime}\right|^{n+2-2 \lambda_{0}}}d x d x^{\prime} d t d t^{\prime}\\
	&\leq C_1e^{\pi |s|}    \int_{(-\frac{\pi}{4}, \frac{\pi}{4})} \int_{(-\frac{\pi}{4}, \frac{\pi}{4})} \frac{\left\|W_{1}(t)\right\|_{L_{x}^{2 \lambda}\left(\mathbb{R}^{n}\right)}^{2 \lambda_{0}}\left\|W_{2}\left(t^{\prime}\right)\right\|_{L_{x}^{2}}^{2 \lambda_{0}}\left(\mathbb{R}^{n}\right)}{\left|t-t^{\prime}\right|^{n+2-2 \lambda_{0}}} d t d t^{\prime}\\
	&\leq C_1 e^{\pi |s|}  \|W_{1}\|_{ L_{t}^{\frac{4 \lambda_{0}}{2 \lambda_{0}-n}} L_{x}^{2 \lambda_{0}}\left((-\frac{\pi}{4}, \frac{\pi}{4}) \times \mathbb{R}^{n}\right)}^{2 \lambda_{0}}
	\|W_{1}\|_{ L_{t}^{\frac{4 \lambda_{0}}{2 \lambda_{0}-n}} L_{x}^{2 \lambda_{0}}\left((-\frac{\pi}{4}, \frac{\pi}{4}) \times \mathbb{R}^{n}\right)}^{2 \lambda_{0}}
	\end{align*}
provided we have  $0 \leq n+2-2 \lambda_{0}<1,$ that is $\frac{n+1}{2}<\lambda_{0} \leq \frac{n+2}{2} .$   By  Theorem 2.9  of  \cite{siman}, we have
	$$
	\left\|W_{1} T_{-1} W_{2}\right\|_{\mathcal{G}^{2 \lambda_{0}}\left(L^{2}((-\frac{\pi}{4}, \frac{\pi}{4}) \times\mathbb{R}^{n})\right)} \leq C\|W_{1}\|_{ L_{t}^{\frac{4 \lambda_{0}}{2 \lambda_{0}-n}} L_{x}^{2 \lambda_{0}}\left((-\frac{\pi}{4}, \frac{\pi}{4}) \times \mathbb{R}^{n}\right)}
	\|W_{1}\|_{ L_{t}^{\frac{4 \lambda_{0}}{2 \lambda_{0}-n}} L_{x}^{2 \lambda_{0}}\left((-\frac{\pi}{4}, \frac{\pi}{4}) \times \mathbb{R}^{n}\right)}.
	$$
By Lemma \ref{CH2dual} and (\ref{CH2p}) (as in the proof of Theorem \ref{CH2diag}) we get (\ref{CH2p2}).
\end{proof}
\begin{thm}\label{CH2STH}(Strichartz inequality for orthonormal functions for Hermite operator)
	Let
	$p, q, n \geq1$ such that  $$ 1\leq q < \frac{n+1}{n-1}\quad \text { and } \quad \frac{2}{p}+\frac{n}{q}=n.$$ For any (possibly infinite) system $\left(u_{j}\right)$ of orthonormal functions in $L^{2}\left(\mathbb{R}^{n}\right)$
	and any coefficients $ \left( n_{j} \right) \subset \mathbb{C},$ we have
	\begin{align}\label{CH22}
	\int_{-\pi}^{\pi}\left( \int_{\mathbb{R}^{n}}\left| \sum_{j} n_{j}\left| \left(e^{-i t H} u_{j}\right) ( x )\right|^{2} \right|^{q} d x \right)^{\frac{p}{q}} d t  \leq C_{n, q}^{p}\left(\sum_{j}\left|n_{j}\right|^{\frac{2 q}{q+1}}\right)^{\frac{p(q+1)}{2 q}},
	\end{align}
	where $C_{n, q}$ is a universal constant which only depends on $n$ and $q$.
\end{thm}
\begin{proof}
Using the fact that the operator $e^{-i t H}$ is unitary, triangle inequality gives (\ref{CH22}) for the pair $(p,q)=(\infty,1)$. Equivalently, the operator $$W\in L_t^\infty L_x^2((-\pi, \pi)\times\mathbb{R}^n)\mapsto We^{-itH}(e^{-itH})^*\overline{W}\in\mathcal{G}^\infty $$ is bounded by Lemma \ref{CH2dual}.
Similarly, by Theorem \ref{CH2diag}, the operator $$W\in L_t^{n+2} L_x^{n+2}((-\pi, \pi)\times\mathbb{R}^n)\mapsto We^{-itH}(e^{-itH})^*\overline{W}\in\mathcal{G}^{n+2} $$ is bounded.
 Applying  the complex interpolation method \cite{ber}(chapter  4), the operator $$W\in L_t^{\frac{2q}{2-q}} L_x^{\frac{2p}{2-p}}((-\pi, \pi)\times\mathbb{R}^n)\mapsto We^{-itH}(e^{-itH})^*\overline{W}\in\mathcal{G}^\alpha $$ is bounded for $2\leq \frac{2q}{2-q}\leq n+2$ and $n+2\leq \frac{2p}{2-p}\leq\infty$.  Again applying Lemma \ref{CH2dual}, the inequality (\ref{CH22}) holds for the range $1\leq q\leq 1+\frac{2}{n}$. By Theorem \ref{CH2fr}, (\ref{CH22}) is valid when $1+\frac{2}{n}<q<\frac{n+1}{n-1}$.
\end{proof}
\section{Optimality of the Schatten exponent}\label{CH2sec5}
In this section, we show that the power $\frac{p(q + 1)}{2q}$ on the right hand side in (\ref{CH22})  is optimal. The inequality (\ref{CH22}) can also be written in terms of the operator
\begin{align}\label{CH2z}
\gamma_0 :=\sum_{j} n_{j}\left|u_{j}\right\rangle\left\langle u_{j}\right|
\end{align}
on $L^{2}\left(\mathbb{R}^{n}\right),$ where  the  Dirac's notation \(|u\rangle\langle v|\) stands  for the
rank-one operator $f \mapsto\langle v, f\rangle u$.  For such $\gamma_0$, let
$$\gamma(t) :=e^{-i t H} \gamma_0 e^{i t H}=\sum_{j} n_{j}\left|e^{-i t H} u_{j}\right\rangle\left\langle e^{-i t H} u_{j}\right|. $$
Then the density of the operator $\gamma(t)$ is given by
\begin{align}\label{CH2y}
\rho_{\gamma(t)} :=\sum_{j} n_{j}\left|e^{-i t H} u_{j}\right|^{2}.
\end{align}
 With these notations (\ref{CH22}) can be  rewritten  as
\begin{align}\label{CH27}
\left\|\rho_{\gamma(t)}\right\|_{L_{t}^{p}L_{x}^{q}\left((-\pi, \pi) \times \mathbb{R}^{n} \right)} \leq C_{n, q}\|\gamma_0\|_{\mathcal{G}^{\frac{2 q}{q+1}}},\end{align} where $\|\gamma_0\|_{\mathcal{G}^{\frac{2q}{q+1}}}=\left(\displaystyle\sum_j |n_j|^{\frac{2q}{q+1}}\right)^{\frac{q+1}{2q}}.$
 %By Lemma \ref{dual}, (\ref{511}) and (\ref{512}) are equivalent to the following bound: for any $\gamma_0 \in \mathcal{G}^{\alpha^{\prime}}(\mathcal{H})$ we have
%$$\left\|\rho_{e^{itH} \gamma_0 e^{itH}}\right\|_{L_t^{\frac{q'}{2}}L_x^{\frac{p'}{2}}((-\pi, \pi)\times \mathbb{R}^{n})} \leq C\|\gamma_0\|_{\mathcal{G}^{\alpha^{\prime}}(\mathcal{H})},
%$$ with $C>0$ is a constant independent of $\gamma_0.$

\begin{prop}\label{CH26}
 (Optimality of the Schatten exponent). Assume that \(n, p, q \geq\)
1 satisfy \(\frac{2}{p}+\frac{n}{q}=n.\) Then we have
$$\sup _{\gamma_0 \in \mathcal{G}^{r}} \frac{\left\|\rho_{e^{-i tH} \gamma_0 e^{i t H}}\right\|_{L_{t}^{p} L_{x}^{q}\left((-\pi, \pi)\times \mathbb{R}^{n}\right)}}{\|\gamma_0\|_{\mathcal{G}^{r}}}=+\infty$$
for all \(r>\frac{2 q}{q+1}.\)
\end{prop}

\begin{proof}
Depending on the positive parameters $\beta, L$ and $\mu$, we construct the family of operators  $$\gamma_0=\frac{1}{(2 \pi)^{n}}\iint_{\mathbb{R}^{n} \times \mathbb{R}^{n}} e^{-\frac{x^{2}}{L^{2}}-\frac{\xi^{2}}{\mu}} \left |F_{x, \xi}\right\rangle\left\langle F_{x, \xi}\right| d x d \xi ,$$ where $F_{x, \xi}(z)=(2 \pi \beta)^{-\frac{n}{4}} e^{-\frac{(z-x)^{2}}{4 \beta}} e^{i \xi \cdot z}$. The functions  $F_{x, \xi}$ are normalized and satisfy
$$
\iint_{\mathbb{R}^{n} \times \mathbb{R}^{n}} \frac{d x d \xi}{(2 \pi)^{n}}\left|F_{x, \xi}\right\rangle\left\langle F_{x, \xi}\right|=1.
$$
%Define $$\gamma=\iint_{\mathbb{R}^{n} \times \mathbb{R}^{n}} \frac{d x d \xi}{(2 \pi)^{n}} e^{-\frac{x^{2}}{^{2}}-\frac{\xi^{2}}{\mu}} \left |F_{x, \xi}\right\rangle\left\langle F_{x, \xi}\right|.$$
By Mehler's formula we get
$$e^{i t H} F_{x, \xi}(z)= (-2 \pi i \sin2t )^{-\frac{n}{2}} (2 \pi \beta)^{-\frac{n}{4}}  \int_{\mathbb{R}^n}  e^{-\frac{i}{2} {\cot2t} \left(z^{2}+y^{2}\right)+\frac{i}{\sin (2 t)} z \cdot y} e^{-\frac{(y-x)^{2}}{4 \beta}} e^{i \xi \cdot y} dy.$$
Therefore
$$\left| e^{i t H} F_{x, \xi}(z)\right| = \left (  \frac{2\beta}{\pi (4\beta ^2 \cos^2 2t +\sin^2 2t)} \right )^{\frac{n}{4}} e^{-\frac{\beta(z-x\cos 2t +\xi \sin 2t)^2}{4\beta ^2 \cos^2 2t +\sin^2 2t}}$$ and
\begin{align*} \rho_{\gamma(t)}(z) & :=\rho_{e^{i tH} \gamma_0 e^{-i t H}(z)} \\
&=\iint_{\mathbb{R}^{n} \times \mathbb{R}^{n}} \frac{d x d \xi}{(2 \pi)^{d}} e^{-\frac{x^{2}}{L^{2}}-\frac{\xi^{2}}{\mu}}  \left|e^{i t H} F_{x, \xi}(z)\right|^{2} \\
&=\left( \frac{ 2 \pi\beta \mu L^{2}}{\left(4\beta ^2+2\beta L^2\right) \cos^2 2t +(1+2\mu \beta ) \sin^2 2t}\right)^{\frac{n}{2}} e^{\frac{-2\beta z^2}{\left(4\beta ^2+2\beta L^2\right ) \cos^2 2t +\left (1+2\mu \beta \right ) \sin^2 2t } }.
\end{align*}
So
\begin{eqnarray*}
\|\rho_{\gamma(t)}\|_{L_x^q(\mathbb{R}^n)}^q &=&\left( \frac{\pi}{q}\right)^{\frac{n}{2}}\left(\mu L^2\right)^{\frac{nq}{2}} \left( \frac{\beta }{\left(4\beta ^2+2\beta L^2\right) \cos^2 2t +\left (1+2\mu \beta \right ) \sin^2 2t }\right)^{\frac{n(q-1)}{2}}.
\end{eqnarray*}
 Using the fact that $n(q-1)p=2q$, we have
\begin{align*}
&\|\rho_{\gamma(t)}\|_{L^p_t L^q_x((-\pi, \pi)\times \mathbb{R}^n)}^p\\&= \left( \frac{\pi}{q}\right)^{\frac{np}{2q}}\left( \mu L^2\right)^{\frac{np}{2}} \int_{[-\pi, \pi ]}   \frac{\beta }{\left(4\beta ^2+2\beta L^2\right ) \cos^2 2t +\left (1+2\mu \beta \right ) \sin^2 2t } dt\\
%&=2\beta  \left( \frac{\pi}{q}\right)^{\frac{np}{2q}}\left( \mu L^2\right)^{\frac{np}{2}} \frac{\pi}{\sqrt{4\beta ^2+2\beta L^2}\sqrt{1+2\mu \beta}}\\
&= \sqrt{2}\pi  \left( \frac{\pi}{q}\right)^{\frac{np}{2q}}\left( \mu L^2\right)^{\frac{np}{2}} \frac{\beta }{\sqrt{2\beta ^2+\beta L^2}\sqrt{1+2\mu \beta}}.
\end{align*}
 Thus
 \begin{align*}
&\|\rho_{\gamma(t)}\|_{L^p_t L^q_x((-\pi, \pi)\times \mathbb{R}^n)}\\
%&= (\sqrt{2}\pi)^{\frac{1}{p}}  \left( \frac{\pi}{q}\right)^{\frac{n}{2q}}\left( \mu L^2\right)^{\frac{n}{2}}  \frac{\beta^{\frac{1}{p} }}{\sqrt{(2\beta ^2+\beta L^2)^{\frac{1}{p}}}\sqrt{(1+2\mu \beta)^{\frac{1}{p}}}}\\
%&=A_{n,p}\left( \mu L^2\right)^{\frac{n}{2}}  \frac{\beta^{\frac{1}{p} }}{(2\beta ^2+\beta L^2)^{\frac{1}{2p}}(1+2\mu \beta)^{\frac{1}{2p}}}\\
%&=A_{n,p}\left( \mu L^2\right)^{\frac{n}{2}}  \frac{1 }{(2\beta + L^2)^{\frac{1}{2p}}(\beta^{-1}+2\mu )^{\frac{1}{2p}}}\\
&=A_{n,p}\left( \mu L^2\right)^{\frac{n}{2}}   (L^2)^{-\frac{1}{2p}}   \mu^{-\frac{1}{2p}}  \frac{1 }{\left (\frac{2\beta}{L^2} + 1\right )^{\frac{1}{2p}}\left (\frac{1}{\mu \beta }+2 \right )^{\frac{1}{2p}}}\\
&=A_{n,p}\left( \mu L^2\right)^{\frac{n}{2}-\frac{1}{2p}}  \frac{1 }{\left (\frac{2\beta}{L^2} + 1\right )^{\frac{1}{2p}}\left (\frac{1}{\mu \beta }+2 \right )^{\frac{1}{2p}}}.
 \end{align*}

 Using the fact that $ \frac{n}{4}\left(1+\frac{1}{q} \right )=\frac{n}{2}-\frac{1}{2p}$ and choosing \(1 / \mu \ll \beta \ll L^{2},\) we obtain
\begin{align*}
&\|\rho_{\gamma(t)}\|_{L^p_t L^q_x((-\pi, \pi)\times \mathbb{R}^n)}
\approx A_{n,p}\left( \mu L^2\right)^{\frac{n}{4}\left(1+\frac{1}{q} \right )} 2^{-\frac{1}{2p}}\approx A_{n,p}2^{-\frac{1}{2p}} \left( \mu L^2\right)^{\frac{n}{2}\left(\frac{1+q}{2q} \right )} \approx A_{n,p}2^{-\frac{1}{2p}} N^{\frac{1+q}{2q}},
\end{align*} where
\begin{align*}
N&=\int_{\mathbb{R}^{n}} \gamma_0(z, z) d z\\
&=\iiint_{\mathbb{R}^{n} \times \mathbb{R}^{n} \times \mathbb{R}^{n}} \frac{d x d \xi}{(2 \pi)^{n}} e^{-\frac{x^{2}}{^{2}}-\frac{\xi^{2}}{\mu}} \left| F_{x\xi}(z)\right|^2 dz\\
&=\iint_{\mathbb{R}^{n} \times \mathbb{R}^{n}} \frac{d x d \xi}{(2 \pi)^{n}} e^{-\frac{x^{2}}{^{2}}-\frac{\xi^{2}}{\mu}} \\
&=A_{n} L^{n} \mu^{\frac{n}{2}}.
\end{align*}
An application of  Berezin-Lieb inequality gives that
$$\operatorname{Tr} \gamma_0^{r} \leq \iint_{\mathbb{R}^{n} \times \mathbb{R}^{n}} \frac{d x d \xi}{(2 \pi)^{d}} e^{-\frac{rx^{2}}{^{2}}-\frac{r\xi^{2}}{\mu}}=r^{-n} N,$$ where  $r\geq 1$ and $N=\frac{(\mu L^2)^{\frac{n}{2}}}{2^n}.$
Therefore $$\frac{\left\|\rho_{e^{-i tH} \gamma_0 e^{i t H}}\right\|_{L^p_t L^q_x((-\pi, \pi)\times \mathbb{R}^n)}}{\|\gamma_0\|_{\mathcal{G}^{r}}}\geq \frac{A_{n,p}2^{-\frac{1}{2p}}}{r^{-\frac{n}{r}}}N^{\left(\frac{1+q}{2q}-\frac{1}{r}\right)}.$$
\end{proof}

Using the similar idea as in Theorem 14 of \cite{frank1}, the following global well-posedness for the Hermite-Hartree equation in Schatten spaces can be obtained as an application of Theorem \ref{CH2STH}.
\begin{thm}(Global well-posedness for the Hermite-Hartree equation)
 Let $w\in L^{q^\prime}(\mathbb{R}^n)$. Under the assumptions of Theorem \ref{CH2STH},
	  for any $\gamma_0  \in  {\mathcal{G}^{\frac{2 q}{(q+1)}}}$, there exists a unique
	$\gamma \in  C_t^0([0, T], {\mathcal{G}^{\frac{2 q}{(q+1)}}} )$
	satisfying
	$\rho_{\gamma} \in  L_{t}^p L_x^q \left([0, T]\times \mathbb{R}^n\right)$
	 and
	$$i\partial_t \gamma = [H+w\star\rho_\gamma, \gamma], \quad \gamma|_{t=0 }= \gamma_0.$$
	\end{thm}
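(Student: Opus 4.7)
The plan is to follow the standard Duhamel/contraction-mapping strategy, exactly as in Theorem 14 of \cite{frank1}, with the classical Strichartz estimate for orthonormal systems replaced throughout by its Hermite analogue, Theorem \ref{STH}. First I would recast the Cauchy problem in mild form: writing $U(t):=e^{-itH}$ and $V_{\gamma}(t):=w\star\rho_{\gamma(t)}$, a solution is a fixed point of
\begin{equation*}
\Phi(\gamma)(t) := U(t)\gamma_{0}U(t)^{*} - i\int_{0}^{t}U(t-s)[V_{\gamma}(s),\gamma(s)]U(t-s)^{*}\,ds,
\end{equation*}
which I would seek in the Banach space
\begin{equation*}
X_{T} := \{\gamma\in C^{0}([0,T];\mathcal{G}^{r}) : \rho_{\gamma}\in L^{p}_{t}L^{q}_{x}\},\qquad r:=\tfrac{2q}{q+1},
\end{equation*}
normed by $\|\gamma\|_{X_{T}}:=\|\gamma\|_{L^{\infty}_{t}\mathcal{G}^{r}}+\|\rho_{\gamma}\|_{L^{p}_{t}L^{q}_{x}}$.

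The linear part of $\Phi$ is controlled immediately: unitarity gives $\|U(t)\gamma_{0}U(t)^{*}\|_{\mathcal{G}^{r}}=\|\gamma_{0}\|_{\mathcal{G}^{r}}$, while Theorem \ref{STH} yields $\|\rho_{U(\cdot)\gamma_{0}U(\cdot)^{*}}\|_{L^{p}_{t}L^{q}_{x}}\leq C\|\gamma_{0}\|_{\mathcal{G}^{r}}$. For the Duhamel integral I would pass from $\rho_{\gamma}$ to $V_{\gamma}$ by Young's inequality, $\|V_{\gamma}\|_{L^{p}_{t}L^{\infty}_{x}}\leq\|w\|_{L^{q'}_{x}}\|\rho_{\gamma}\|_{L^{p}_{t}L^{q}_{x}}$, and then use the dual version of Theorem \ref{STH} (the Schatten bound (\ref{511}) of Lemma \ref{dual} applied to $A=U(t)$) together with H\"older's inequality in Schatten classes to control both the $L^{\infty}_{t}\mathcal{G}^{r}$ norm of the Duhamel integral and the $L^{p}_{t}L^{q}_{x}$ norm of its density. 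A multilinear rearrangement should then produce an estimate of the shape
\begin{equation*}
\|\Phi(\gamma)-\Phi(\tilde{\gamma})\|_{X_{T}}\leq C\,T^{\delta}\,\|w\|_{L^{q'}}\bigl(\|\gamma\|_{X_{T}}+\|\tilde{\gamma}\|_{X_{T}}\bigr)\|\gamma-\tilde{\gamma}\|_{X_{T}}
\end{equation*}
for some $\delta>0$, turning $\Phi$ into a strict contraction on a closed ball of $X_{T}$ of radius $\lesssim\|\gamma_{0}\|_{\mathcal{G}^{r}}$ provided $T$ is chosen small enough depending on $\|\gamma_{0}\|_{\mathcal{G}^{r}}$ and $\|w\|_{L^{q'}}$; the contraction also gives the uniqueness.

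Global existence will follow by iteration. The local solution constructed above can be written as $\gamma(t)=\mathcal{U}_{V_{\gamma}}(t,0)\gamma_{0}\mathcal{U}_{V_{\gamma}}(t,0)^{*}$, where $\mathcal{U}_{V_{\gamma}}$ is the unitary two-parameter propagator generated by the self-adjoint time-dependent Hamiltonian $H+V_{\gamma}(t)$; since conjugation by a unitary operator preserves every Schatten norm, $\|\gamma(t)\|_{\mathcal{G}^{r}}=\|\gamma_{0}\|_{\mathcal{G}^{r}}$ on the interval of existence, so the local step may be restarted from any time and iterated to reach an arbitrary $T>0$.

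The principal technical obstacle, as in \cite{frank1}, is extracting the small factor $T^{\delta}$ in the nonlinear bound: the Schatten estimate furnished by Lemma \ref{dual} is homogeneous in the time length and, on its own, does not close the contraction. To squeeze out the missing $T^{\delta}$ I expect to combine it with H\"older in the time variable, or to invoke a Christ-Kiselev type argument converting the homogeneous Strichartz estimate into its retarded Duhamel form; the admissibility relation $\frac{2}{p}+\frac{n}{q}=n$ built into Theorem \ref{STH} is precisely what keeps the resulting H\"older exponents consistent.
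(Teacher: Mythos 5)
Your proposal takes essentially the same route as the paper: the paper offers no details beyond stating that the result follows ``using the similar idea as in Theorem 14 of \cite{frank1}'' as an application of Theorem \ref{STH}, which is precisely the Duhamel/contraction-mapping scheme with the Hermite Strichartz estimate for orthonormal systems that you outline. Your sketch (including the globalization step via unitary conjugation by the propagator of $H+w\star\rho_\gamma$ preserving the $\mathcal{G}^{\frac{2q}{q+1}}$ norm, and the caveat about extracting smallness in $T$) is in fact more detailed than what the paper itself provides.
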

\section*{Acknowledgments}
The first author thanks  Indian Institute of Technology Guwahati for the support provided during the period of this work.

\end{document}